\newcommand{\R}{\mathbb{R}}
\newcommand{\N}{\mathbb{N}}
\theoremstyle{plain}
\newtheorem{thm}{Theorem}[section]
\newtheorem{lemma}[thm]{Lemma}
\theoremstyle{definition}
\newtheorem{definition}[thm]{Definition}
\newtheorem{rem}[thm]{Remark}
\newtheorem{example}[thm]{Example}
\newcommand{\equa}{\begin{eqnarray*}}
\newcommand{\tion}{\end{eqnarray*}}
\newcommand{\equal}{\begin{eqnarray}}
\newcommand{\tionl}{\end{eqnarray}}
\def\timenow{\@tempcnta\time
\@tempcntb\@tempcnta
\divide\@tempcntb60
\ifnum10>\@tempcntb0\fi\number\@tempcntb
:\multiply\@tempcntb60
\advance\@tempcnta-\@tempcntb
\ifnum10>\@tempcnta0\fi\number\@tempcnta}
\title{Complements and Improvements Regarding Distributivity of the Product for $\sigma$-Algebras with Respect to the Intersection}
\author{K.P.S.~Bhaskara Rao$^1$, Alexander Steinicke$^2$\\
}
\date{}
\begin{document}

\maketitle
\begin{abstract}\noindent
We present a variety of refined conditions for $\sigma$ algebras $\mathcal{A}$ (on a set $X$), $\mathcal{F}, \mathcal{G}$ (on a set $U$) such that the distributivity equation
$$(\mathcal{A}\otimes\mathcal{F})\cap(\mathcal{A}\otimes\mathcal{G})=\mathcal{A}\otimes\left(\mathcal{F}\cap\mathcal{G}\right),$$
holds -- or is violated. \\
The article generalizes the results in \cite{Steinicke21} and includes a positive result for $\sigma$ algebras generated by at most countable partitions, was not covered before. We also present a proof that counterexamples may be constructed whenever $X$ is uncountable and there exist two $\sigma$-algebras on $X$ which are both countably separated, but their intersection is not. We present examples of such structures. In the last section, we extend \cite[Theorem 2]{Steinicke21} from analytic to the setting of Blackwell spaces.
\end{abstract}

\vspace{1em}
{\noindent \textit{Keywords:} sigma algebra; intersection of sigma algebras; product sigma algebras; counterexample for sigma algebras
}
{\noindent
\footnotetext[1]{Department of Computer Information Systems, Indiana University Northwest, Gary, IN, USA. \\
\hspace*{1.5em}  {\tt bkoppart{\rm@}iu.edu }}
\footnotetext[2]{Department of Mathematics and Information Technology, Montanuniversitaet Leoben, Austria. \\ \hspace*{1.5em}  {\tt alexander.steinicke{\rm@}unileoben.ac.at}}}


\section{Introduction}

The question of distributivity of $\sigma$-algebras with respect to the intersection has recently been studied in \cite{Steinicke21}, motivated by questions from stochastic analysis (therein and in \cite{Steinicke16}), but also coming from a question about sequences of probability spaces in general stochastics \cite{Parry, stackexchange}$^3$\footnotetext[3]{There was an erroneous statement in \cite{Steinicke21} regarding the discussions in the references \cite{Parry,stackexchange}:\\ The assertion that $(\mu\otimes\nu)(K)=0$ for every $K\in \bigcap {n\in \N}(\mathcal{A} n\otimes\mathcal{U})\setminus \biggl(\bigcap {n\in \N}\mathcal{A} n\biggr)\otimes\mathcal{U}$ has to be replaced with: For all $K\in \bigcap {n\in \N}(\mathcal{A} n\otimes\mathcal{U})$ there is an $L\in \biggl(\bigcap {n\in \N}\mathcal{A} n\biggr)\otimes\mathcal{U}$ such that $(\mu\otimes\nu)(L\triangle K)=0$.}. While the problem can be formulated quite simply using only well known basic constructions, its answer still is nontrivial. The question, apart from the investigation in \cite{Steinicke21} has not been addressed in various remarkable sources for results on $\sigma$-algebras (sometimes referred to as {\it Borel structures}) exceeding standard literature, such as Aumann \cite{Aumann}, Basu \cite{Basu}, Bhaskara Rao and Bhaskara Rao \cite{BhaskaraBhaskara}, Bhaskara Rao and Rao \cite{RaoRao}, Bhaskara Rao and Shortt \cite{RaoShortt}, Blackwell \cite{Blackwell}, Georgiou \cite{Georgiou}, Grzegorek \cite{Grzegorek}, Rao \cite{Rao} to mention an (incomplete) list of contributions to the theory. A precise description of the problem is the following.\medskip

Let $\mathcal{A}$ be a $\sigma$-algebra on a (nonempty) set $X$ and $\mathcal{F},\mathcal{G}$ be two $\sigma$-algebras on a (nonempty) set $U$. The product of the $\sigma$-algebras $\mathcal{C}$ on a set $Y$ and $\mathcal{D}$ on a set $Z$ is denoted by $\mathcal{C}\otimes\mathcal{D}$ on $Y\times Z$ and  is defined as the smallest $\sigma$-algebra containing all Cartesian products (or rectangles) $\{C\times D: C\in \mathcal{C}, D\in \mathcal{D}\}$. We ask, for which $\sigma$-algebras $\mathcal{A},\mathcal{F}, \mathcal{G}$ is
\begin{align}\label{eq:main}
(\mathcal{A}\otimes\mathcal{F})\cap(\mathcal{A}\otimes\mathcal{G})=\mathcal{A}\otimes\left(\mathcal{F}\cap\mathcal{G}\right),
\end{align}
which means that '$\otimes$' is distributive with respect to '$\cap$'.

The first trivial observation is the inclusion 
\begin{align*}
(\mathcal{A}\otimes\mathcal{F})\cap(\mathcal{A}\otimes\mathcal{G})\supseteq\mathcal{A}\otimes\left(\mathcal{F}\cap\mathcal{G}\right).
\end{align*}
Also rather easy to show is the relation
\begin{align*}
(\mathcal{A}\otimes\mathcal{F})\vee(\mathcal{A}\otimes\mathcal{G})=\mathcal{A}\otimes\left(\mathcal{F}\vee\mathcal{G}\right),
\end{align*}
see e.g. \cite[Proof of Lemma 3.2, Step 2]{Steinicke16}.
The nontrivial results in \cite{Steinicke21} so far pointed out a counterexample to \eqref{eq:main} (Theorem 1) and showed that in case of analytic measurable spaces $(X,\mathcal{M}), (U,\mathcal{U})$ with $\mathcal{A}\subseteq\mathcal{M}$, $\mathcal{F}, \mathcal{G}\subseteq\mathcal{U}$,  and countably generated $\sigma$-Algebras $\mathcal{F}\cap\mathcal{G}, \mathcal{A}, (\mathcal{A}\otimes\mathcal{F})\cap(\mathcal{A}\otimes\mathcal{G})$, the equation \eqref{eq:main} is equivalent to all atoms of $(\mathcal{A}\otimes\mathcal{F})\cap(\mathcal{A}\otimes\mathcal{G})$ being products (Theorem 2).\medskip

Here, we generalize these results in the following directions:
\begin{itemize}
\item We show that the distributivity equation \eqref{eq:main} holds if either one of the three $\sigma$-algebras are given by a countable partition or if  $\mathcal{F}\cap\mathcal{G}$ is given by a countable partition. This is not covered in \cite{Steinicke21}.
\item We show that counterexamples are not limited to the special one from \cite[Theorem 1]{Steinicke21} but can be constructed whenever $X$ is not countable and $\mathcal{F}, \mathcal{G}$ are countably separated, but $\mathcal{F}\cap\mathcal{G}$ is not.
\item We obtain a characterization of the distributivity equation \eqref{eq:main} in terms of its atoms, similar to \cite[Theorem 2]{Steinicke21}, but now for strongly Blackwell spaces instead of analytic ones.
\end{itemize}

\section{Distributivity for $\sigma$-algebras given by countable partitions}

In this section we will treat the countable partition case, as the finite case basically works in a similar, but easier way. We will write $\sigma(\{C_1, C_2,\dotsc\}$ for the $\sigma$-algebra generated by $\{C_1, C_2, \dotsc\}.$  Let $\N$ stand for the set of  positive integers. We will say that a $\sigma$-algebra $\mathcal{C}$ is given by a countable partition if there are countably many(or finitely many) pairwise disjoint nonempty sets $\{C_1, C_2, \dotsc\}$ in $\mathcal{C}$ such that $\{\bigcup_{i \in I} C_i: I \subset \N\} =\mathcal{C} .$ Note that on a countable set, every $\sigma$-algebra is given by a countable partition. Observe also that if $\mathcal{C}$ is a $\sigma$-algebra given by a countable partition $\{C_1, C_2, \dotsc\}$ and $\mathcal{D}$ is another $\sigma$-algebra  then every set in $\mathcal{C} \otimes \mathcal{D}$ is of the form $\bigcup_{i \in \N} (C_i \times D_i)$ for some sequence $(D_i)_{i\in \N}$ such that $D_i \in \mathcal{D}$  for all $i \geq 1.$ This representation is also unique. We will be using this result repeatedly in some of the proofs below.
\begin{thm}
Let $\mathcal{A}$ be a $\sigma$-algebra given by a countable partition of $X$. Let $\mathcal{F}, \mathcal{G}$ be $\sigma$-algebras on $U$. Then equation \eqref{eq:main} is true.
\end{thm}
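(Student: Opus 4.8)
The plan is to use the explicit normal form for elements of a product $\sigma$-algebra with a countable-partition factor that was just recorded before the theorem. Write $\mathcal{A}=\sigma(\{A_1,A_2,\dotsc\})$ for the generating partition of $X$. Then every $K\in\mathcal{A}\otimes\mathcal{F}$ has a unique representation $K=\bigcup_{i\in\N}(A_i\times F_i)$ with $F_i\in\mathcal{F}$, and similarly every $L\in\mathcal{A}\otimes\mathcal{G}$ has a unique representation $L=\bigcup_{i\in\N}(A_i\times G_i)$ with $G_i\in\mathcal{G}$. The inclusion $\mathcal{A}\otimes(\mathcal{F}\cap\mathcal{G})\subseteq(\mathcal{A}\otimes\mathcal{F})\cap(\mathcal{A}\otimes\mathcal{G})$ is the trivial one noted in the introduction, so the whole content is the reverse inclusion.

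First I would take $M\in(\mathcal{A}\otimes\mathcal{F})\cap(\mathcal{A}\otimes\mathcal{G})$ and write it in both normal forms: $M=\bigcup_{i}(A_i\times F_i)=\bigcup_{i}(A_i\times G_i)$ with $F_i\in\mathcal{F}$, $G_i\in\mathcal{G}$. Since the $A_i$ are nonempty and pairwise disjoint and their union is all of $X$ (or, if only finitely many, still a partition of $X$), for each fixed $i$ I can pick a point $x\in A_i$ and intersect the $x$-section of both sides: the section of $\bigcup_j(A_j\times F_j)$ at $x\in A_i$ is exactly $F_i$, and likewise the section of the other representation is $G_i$. Hence $F_i=G_i$ for every $i$. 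Therefore $F_i=G_i\in\mathcal{F}\cap\mathcal{G}$ for all $i$, and so $M=\bigcup_i(A_i\times F_i)$ with each $A_i\in\mathcal{A}$ and each $F_i\in\mathcal{F}\cap\mathcal{G}$, which exhibits $M$ as a countable union of measurable rectangles of $\mathcal{A}\otimes(\mathcal{F}\cap\mathcal{G})$; thus $M\in\mathcal{A}\otimes(\mathcal{F}\cap\mathcal{G})$.

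The only genuinely non-routine point, and the step I would be most careful about, is the uniqueness of the representation $K=\bigcup_i(A_i\times F_i)$ — specifically, that the coefficient sets $F_i$ are uniquely determined by $K$ — since the argument that $F_i=G_i$ rests on it (equivalently, on the section computation, which is really the same fact). This is exactly the "unique representation" assertion in the paragraph preceding the theorem, so I would either invoke it directly or give the one-line section argument: for $x\in A_i$, the section $M_x=\{u:(x,u)\in M\}$ equals $F_i$ from one representation and $G_i$ from the other, forcing $F_i=G_i$. A minor bookkeeping remark: the case of a finite partition is literally the same argument with a finite index set, and the degenerate possibility that some $A_i$ could be empty is excluded by the definition of "given by a countable partition," so no point-picking issue arises. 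I expect the write-up to be short once the normal form and its uniqueness are cited.
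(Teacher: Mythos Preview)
Your argument is correct and is essentially the same as the paper's: both use the normal form $\bigcup_i(A_i\times F_i)$ for sets in $\mathcal{A}\otimes\mathcal{F}$ and then a section at some $x\in A_i$ to force $F_i\in\mathcal{G}$. The only cosmetic difference is that the paper does not bother writing out the second normal form $\bigcup_i(A_i\times G_i)$; it simply notes that the $x$-section $B_x=F_i$ must lie in $\mathcal{G}$ because $B\in\mathcal{A}\otimes\mathcal{G}$, which is the same section computation you give.
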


\begin{proof} Let $\mathcal{A}$ be given by a countable partition $\{A_1, A_2, \dotsc\}.$

We will take a general set $B \in (\mathcal{A}\otimes\mathcal{F})\cap (\mathcal{A}\otimes\mathcal{G})$ and show that $B \in \mathcal{A}\otimes\left(\mathcal{F}\cap\mathcal{G}\right).$ By the above mentioned result, $B$ can be written as $\bigcup_{i \in \N} (A_i \times F_i)$ for some $F_i \in \mathcal{F}$ for every $i.$ Since $B \in \mathcal{A}\otimes\mathcal{G}$, by taking an $x$ in $A_i$ and considering the section $B_x = F_i$, we see that $F_i \in \mathcal{G}$ for every $i.$  This shows that $B \in \mathcal{A}\otimes\left(\mathcal{F}\cap\mathcal{G}\right).$
\end{proof}

The following result concerns the right factors.

\begin{thm}
If at least one of $\mathcal{F}$ and $\mathcal{G}$ is given by a countable partition, equation \eqref{eq:main} is true.  More generally, if $\mathcal{F} \cap \mathcal{G}$ is given by a countable partition, equation \eqref{eq:main} is true.  
\end{thm}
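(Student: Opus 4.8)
The plan is to reduce the general statement to the case already understood, namely where $\mathcal F\cap\mathcal G$ itself is given by a countable partition, and to handle that case directly using the unique-representation observation recalled before Theorem 2.1. Note first that if one of $\mathcal F$, $\mathcal G$ is given by a countable partition, say $\mathcal F = \sigma(\{F_1, F_2,\dotsc\})$ for a countable partition $\{F_1,F_2,\dotsc\}$ of $U$, then $\mathcal F\cap\mathcal G\subseteq\mathcal F$ is a sub-$\sigma$-algebra of a $\sigma$-algebra generated by a countable partition, hence is itself generated by a countable partition: every element of $\mathcal F\cap\mathcal G$ is a union of some of the $F_i$, and grouping the $F_i$ according to the equivalence relation ``$F_i$ and $F_j$ lie in exactly the same members of $\mathcal F\cap\mathcal G$'' yields a (still at most countable) partition generating $\mathcal F\cap\mathcal G$. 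So it suffices to prove the ``more general'' assertion.

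So assume $\mathcal F\cap\mathcal G$ is given by a countable partition $\{P_1, P_2, \dotsc\}$ of $U$. I would take an arbitrary $B\in(\mathcal A\otimes\mathcal F)\cap(\mathcal A\otimes\mathcal G)$ and show $B\in\mathcal A\otimes(\mathcal F\cap\mathcal G)$. The key structural tool is sectioning: for $x\in X$ the section $B_x=\{u\in U:(x,u)\in B\}$ lies in $\mathcal F$ (because $B\in\mathcal A\otimes\mathcal F$) and also in $\mathcal G$ (because $B\in\mathcal A\otimes\mathcal G$), hence $B_x\in\mathcal F\cap\mathcal G$ for every $x$. Thus each section $B_x$ is a union of members of the partition $\{P_j\}$; equivalently, $B_x$ is constant on each $P_j$, so that the indicator $\one_B(x,u)$ depends on $u$ only through the index $j$ with $u\in P_j$. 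Writing $B = \bigcup_{j} (C_j\times P_j)$ where $C_j := \{x\in X : P_j\subseteq B_x\}$, we get a disjoint decomposition of $B$ indexed by the countably many $P_j$, and it remains only to check that each $C_j\in\mathcal A$. For that, fix any $u_j\in P_j$; then $C_j$ equals the section $B^{u_j}=\{x:(x,u_j)\in B\}$, which belongs to $\mathcal A$ because $B\in\mathcal A\otimes\mathcal F$ (sections of a product-measurable set in the other variable are measurable). Hence $B=\bigcup_j (C_j\times P_j)$ with $C_j\in\mathcal A$ and $P_j\in\mathcal F\cap\mathcal G$, so $B\in\mathcal A\otimes(\mathcal F\cap\mathcal G)$, which together with the trivial inclusion gives \eqref{eq:main}.

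I do not expect a serious obstacle here; the argument is a clean dual of the proof of Theorem 2.1, with the roles of left and right factors swapped, and the only points requiring a word of care are: (i) the measurability of both ``horizontal'' sections $B_x$ (in $\mathcal F$ and in $\mathcal G$ separately, justifying $B_x\in\mathcal F\cap\mathcal G$) and of the ``vertical'' sections $B^{u}$ (in $\mathcal A$), all of which are the standard section-measurability facts for product $\sigma$-algebras; and (ii) the reduction of the first sentence of the theorem to the second, i.e. verifying that a sub-$\sigma$-algebra of a $\sigma$-algebra generated by a countable partition is again generated by an at most countable partition. If anything needs slight expansion in the write-up it is (ii), since one should note the partition obtained by grouping atoms is at most countable precisely because there are at most countably many original blocks $F_i$.
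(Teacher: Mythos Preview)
Your proof is correct and follows essentially the same approach as the paper: reduce to the case where $\mathcal F\cap\mathcal G$ is given by a countable partition, observe that every horizontal section $B_x$ lies in $\mathcal F\cap\mathcal G$, and then decompose $B=\bigcup_j(C_j\times P_j)$ with $C_j=\{x:P_j\subseteq B_x\}\in\mathcal A$. Your justification that $C_j=B^{u_j}$ is a vertical section (hence in $\mathcal A$) is slightly more explicit than the paper's, and your reduction argument for the first sentence is spelled out in more detail, but the underlying strategy is identical.
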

\begin{proof} Clearly, if $\mathcal{F}$ (or $\mathcal{G}$) is given by a countable partition,  $\mathcal{F} \cap \mathcal{G}$ is also given by a countable partition. Hence it is sufficient to prove the second part of the statement of the theorem. Let $\mathcal{F} \cap \mathcal{G}$ be given by a countable partition $\{H_1, H_2, \dotsc\}.$

We will take a general set $B \in (\mathcal{A}\otimes\mathcal{F})\cap (\mathcal{A}\otimes\mathcal{G})$ and show that $B \in \mathcal{A}\otimes\left(\mathcal{F}\cap\mathcal{G}\right).$ Let $x$ be a point in $X.$ Consider the $x$-section $B_x = \{u:(x, u) \in B\}.$ Then, $B_x \in \mathcal{F} \cap \mathcal{G}.$ Hence $B_x \supseteq H_i$ for some $i$ if $B_x \neq \emptyset.$ Now, for a particular $i \in \N,$ if we define the set $A_i = \{x: B_x \supseteq H_i\}$ then, $A_i \in \mathcal{A}$ and $B \supseteq A_i\times H_i.$ It easily follows that $B = \bigcup_{i\in \N} (A_i \times H_i).$ Thus $B \in \mathcal{A}\otimes\left(\mathcal{F}\cap\mathcal{G}\right).$


\end{proof}

Thus we have seen that if at least one of the three $\sigma$-algebras $\cal A, \cal F, \cal G$ in equation \eqref{eq:main} is given by a countable partition, \eqref{eq:main} is true.  We will extend this result to the case of $\sigma$-algebras with the property that every countably generated sub-$\sigma$-algebra is given by a countable partition.  The $\sigma$-algebra of countable and  co-countable sets on any infinite set is an example of such a $\sigma$-algebra (which itself is not even countably generated). This $\sigma$-algebra is atomic. There are examples of atomless $\sigma$-algebras (on any uncountable set) with this property. See \cite[Remark 5, p.~108]{RaoRao1} for examples of such $\sigma$-algebras.

\begin{thm}\label{thm:countable-sub-sigma}
If one of the three $\sigma$-algebras  $\mathcal{A}$, $\mathcal{F}$ and $\mathcal{G}$ in \eqref{eq:main} is such that   every countably generated sub-$\sigma$-algebra is given by a countable partition, then \eqref{eq:main} is true. However, if $\mathcal{F} \cap \mathcal{G}$ has the property that every countably generated sub-$\sigma$-algebra is given by a countable partition, but $\mathcal{F} \cap \mathcal{G}$ itself is not given by a countable partition, \eqref{eq:main} need not be true.
\end{thm}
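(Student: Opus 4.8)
The plan is to prove the two assertions separately. For the positive part, I would argue essentially as in the proofs of the two preceding theorems, but localizing to a suitably chosen countably generated sub-$\sigma$-algebra. Fix a set $B\in(\mathcal{A}\otimes\mathcal{F})\cap(\mathcal{A}\otimes\mathcal{G})$. Since $B$ lies in a product $\sigma$-algebra, it is already generated by countably many rectangles; collecting the left-hand factors of these rectangles (separately from the two representations of $B$ coming from $\mathcal{A}\otimes\mathcal{F}$ and from $\mathcal{A}\otimes\mathcal{G}$) gives countably many sets in $\mathcal{A}$, and collecting the right-hand factors gives countably many sets in $\mathcal{F}$ and countably many in $\mathcal{G}$. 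Let $\mathcal{A}_0\subseteq\mathcal{A}$, $\mathcal{F}_0\subseteq\mathcal{F}$, $\mathcal{G}_0\subseteq\mathcal{G}$ be the countably generated sub-$\sigma$-algebras they generate; then $B\in(\mathcal{A}_0\otimes\mathcal{F}_0)\cap(\mathcal{A}_0\otimes\mathcal{G}_0)$. Now if, say, $\mathcal{A}$ has the stated property, then $\mathcal{A}_0$ is given by a countable partition, and Theorem 2.1 (applied with $\mathcal{A}_0,\mathcal{F}_0,\mathcal{G}_0$) yields $B\in\mathcal{A}_0\otimes(\mathcal{F}_0\cap\mathcal{G}_0)\subseteq\mathcal{A}\otimes(\mathcal{F}\cap\mathcal{G})$. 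The cases where $\mathcal{F}$, $\mathcal{G}$, or $\mathcal{F}\cap\mathcal{G}$ has the property are handled the same way via Theorem 2.2; in the $\mathcal{F}\cap\mathcal{G}$ case one must additionally check $\mathcal{F}_0\cap\mathcal{G}_0$ is a countably generated sub-$\sigma$-algebra of $\mathcal{F}\cap\mathcal{G}$, which is immediate, so it too is given by a countable partition.

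The subtle point in the positive part is the inclusion $\mathcal{F}_0\cap\mathcal{G}_0\subseteq\mathcal{F}\cap\mathcal{G}$ used at the end: this is trivially true, but one should note that we do \emph{not} need equality $\mathcal{F}_0\cap\mathcal{G}_0=(\mathcal{F}\cap\mathcal{G})_0$ (which can fail) — we only need that the set $B$ produced by Theorems 2.1/2.2 already lands in $\mathcal{A}\otimes(\mathcal{F}\cap\mathcal{G})$, and the containment $\mathcal{F}_0\cap\mathcal{G}_0\subseteq\mathcal{F}\cap\mathcal{G}$ suffices for that. I expect this bookkeeping to be the only place where care is needed in the first half.

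For the negative part, the claim is that the hypothesis ``every countably generated sub-$\sigma$-algebra of $\mathcal{F}\cap\mathcal{G}$ is given by a countable partition, but $\mathcal{F}\cap\mathcal{G}$ itself is not'' does not force \eqref{eq:main}. The strategy is to exhibit a counterexample, and the natural tool is the general counterexample construction announced in the introduction (third bullet / the section following this one): counterexamples exist whenever $X$ is uncountable and there are two countably separated $\sigma$-algebras on $X$ whose intersection is not countably separated. So I would take $\mathcal{F}=\mathcal{G}$ to be the countable–co-countable $\sigma$-algebra on an uncountable set $U$ (its only countably generated sub-$\sigma$-algebras are generated by countable partitions, as noted in the remark preceding the theorem, yet it is not itself given by a countable partition), and then choose $X$ uncountable together with countably separated $\sigma$-algebras on $X$ with non-countably-separated intersection, as produced in the examples later in the paper; feeding these into that construction gives $\sigma$-algebras violating \eqref{eq:main} while $\mathcal{F}\cap\mathcal{G}=\mathcal{F}$ has exactly the stated property. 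The main obstacle here is purely organizational: this half of the theorem depends on results and examples stated later in the paper, so the honest write-up is a short forward reference rather than a self-contained argument, and one must make sure the roles of $\mathcal{F}$, $\mathcal{G}$ and $X$ in that later construction are compatible with the countable–co-countable choice for the right-hand factors.
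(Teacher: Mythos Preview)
Your positive argument for the cases $\mathcal{A}$, $\mathcal{F}$, $\mathcal{G}$ is correct and is exactly what the paper does: localize $B$ to countably generated sub-$\sigma$-algebras and invoke Theorems~2.1/2.2. There are, however, two genuine errors elsewhere.

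First, your attempt to absorb the $\mathcal{F}\cap\mathcal{G}$ case into the positive half is wrong, and in fact contradicts the second assertion you are supposed to prove. The claim that ``$\mathcal{F}_0\cap\mathcal{G}_0$ is a countably generated sub-$\sigma$-algebra of $\mathcal{F}\cap\mathcal{G}$, which is immediate'' is false: the intersection of two countably generated $\sigma$-algebras need not be countably generated. The Aumann example in Section~3 has $\mathcal{B}$ and $\mathcal{D}$ both countably generated while $\mathcal{B}\cap\mathcal{D}$ is the countable--co-countable $\sigma$-algebra, which is not countably generated. This failure is precisely why the localization trick that works for $\mathcal{A}$, $\mathcal{F}$, $\mathcal{G}$ individually does not transfer to $\mathcal{F}\cap\mathcal{G}$.

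Second, and more seriously for what the theorem actually asserts, your proposed counterexample for the negative half does not work. You suggest taking $\mathcal{F}=\mathcal{G}$ to be the countable--co-countable $\sigma$-algebra, but if $\mathcal{F}=\mathcal{G}$ then \eqref{eq:main} is a tautology for every choice of $\mathcal{A}$, so nothing is violated. The later construction you invoke (Theorem~\ref{thm:BcapDcounterexample}) does not let you pick $\mathcal{F}$ and $\mathcal{G}$ independently on a separate set $U$; in that construction the roles of $\mathcal{F}$ and $\mathcal{G}$ in \eqref{eq:main} are played by the two \emph{distinct} countably separated $\sigma$-algebras $\mathcal{B}$ and $\mathcal{D}$ themselves (with $\mathcal{A}=\mathcal{B}\vee\mathcal{D}$ and $U=X$). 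The paper's own argument for the negative half simply cites the concrete example from \cite[Theorem~1]{Steinicke21} (reproduced here as Example~3.5(a)): there $\mathcal{F}$ and $\mathcal{G}$ are distinct countably separated $\sigma$-algebras on $[0,1]$ whose intersection is the countable--co-countable $\sigma$-algebra, which has the stated property without being given by a countable partition, and \eqref{eq:main} fails.
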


\begin{proof} We will treat the case of $\mathcal{F}$  having the property that every countably generated sub-$\sigma$-algebra is given by a countable  partition $\{F_1, F_2, \dotsc\}.$ Other cases can be covered using a similar argument.

We will take a general set $B \in (\mathcal{A}\otimes\mathcal{F})\cap (\mathcal{A}\otimes\mathcal{G})$ and show that $B \in \mathcal{A}\otimes\left(\mathcal{F}\cap\mathcal{G}\right).$ 

We know that if a set $B \in \mathcal{C}\otimes\mathcal{D}$ there exists $\{C_1, C_2, \dotsc \}$ in $\mathcal{C}$ and $\{D_1, D_2, \dotsc \}$ in $\mathcal{D}$ such that $B \in 
\mathcal{C}_0 \otimes \mathcal{D}_0$ where $\mathcal{C}_0$ is the sub-$\sigma$-algebra generated by $\{C_1, C_2, \dotsc \}$  and $\mathcal{D}_0$ is the sub-$\sigma$-algebra generated by $\{D_1, D_2, \dotsc \}$. We will apply this result to $\mathcal{A}\otimes\mathcal{F}.$

Since $B \in \mathcal{A}\otimes\mathcal{F}$ there exists $\{A_1, A_2, \dotsc \}$ in $\mathcal{A}$ and $\{F_1, F_2, \dotsc \}$ in $\mathcal{F}$ such that $B \in 
\mathcal{A}_0 \otimes \mathcal{F}_0$ where $\mathcal{A}_0$ is the sub-$\sigma$-algebra generated by $\{A_1, A_2, \dotsc \}$  and $\mathcal{F}_0$ is the sub-$\sigma$-algebra generated by $\{F_1, F_2, \dotsc \}.$ Since $\mathcal{F}_0$ is a countably generated sub-$\sigma$-algebra it is given by a countable partition. The previous theorem gives us that $B \in \mathcal{A}_0\otimes\left(\mathcal{F}_0\cap\mathcal{G}_0\right).$  Hence $B \in \mathcal{A}\otimes\left(\mathcal{F}\cap\mathcal{G}\right).$

The example in \cite[Theorem 1]{Steinicke21} has the property that $\mathcal{F} \cap \mathcal{G}$ is the countable co-countable $\sigma$-algebra and this $\sigma$-algebra has the property that every countably generated sub $\sigma$-algebra is given by a countable partition. Thus we have concluded the second statement of the theorem.

\end{proof}

In Theorem \ref{thm:countablySeparatedSub} below, we will show that the second part of the above theorem is true in general.

We remark that, as a consequence of Theorem 2.2,  if $\cal F \cap \cal G$ is the trivial $\sigma$-algebra $\{\emptyset, U\}$ or a finite $\sigma$-algebra, then also \eqref{eq:main} is true.

\section{Counterexamples for Uncountable Sets}

In this section here, we first point out, that for $\sigma$-algebras $\mathcal{F}$ and $\mathcal{G}$ on a set $U$, which are not given by countable partitions, such that $\mathcal{F}\cap\mathcal{G}$ is infinite and neither $\mathcal{F}\subseteq\mathcal{G}$ nor $\mathcal{G}\subseteq\mathcal{F}$ hold, and for a $\sigma$-algebra $\mathcal{A}$, there are examples such that  \eqref{eq:main} does not hold. We already saw classes of examples of $\sigma$-algebras such that \eqref{eq:main} holds.

Let us recall a basic definition first.
\begin{definition}
Let $(Y,\mathcal{C})$ be a measurable space. 
\begin{enumerate}[(i)]
\item The $\sigma$-algebra $C$ is called {\it separated} if there is a set $I$, together with sets $\{A_i:i\in I\}\subseteq\mathcal{C}$, that separate the points of $Y$, that is, for any two points $x,y\in Y$ there is an $i\in I$ such that $x\in A_i$ and $y\notin A_i$. We call such a system of sets {\it separator}.
\item The $\sigma$-algebra $\mathcal{C}$ is called {\it countably separated} if $\mathcal{C}$ contains a countable separator. 
\end{enumerate}
\end{definition}
\begin{rem}
\begin{enumerate}[(i)]
\item Note that if a countable separator exists for $(Y,\mathcal{C})$ then $\cal C$ contains all singletons: just write for $x\in Y$,
$$\{x\}=\bigcap_{\substack{i\geq 1\\ x\in A_i}}A_i\cap\bigcap_{\substack{i\geq 1\\ x\notin A_i}}A_i^c.$$
\item If $\mathcal{C}$ is separated and countably generated by the generator $\{A_i:i\geq 1\}$, then this generator features as countable separator as well: Assume the contrary, that there are no sets separating the points $x$ and $y\in Y$. That means that  all sets $A_i$ contain either both, $x$ and $y$, or none of them. The smallest $\sigma$-algebra that contains all the $A_i$ and which does not separate $x$ and $y$ is the $\sigma$-algebra generated by $\left\{A_i: i\geq 1\right\}$ and must thus be $\mathcal{C}$. But $\mathcal{C}$ is separable. Hence $\left\{A_i: i\geq 1\right\}$ is a separator.
\end{enumerate}
\end{rem}
\begin{example}
\begin{enumerate}[(a)]
\item The Borel $\sigma$-algebra of any separable metric space is countably generated and countably separated.
\item The $\sigma$-algebra of Borel sets of $\R$ invariant of translation by $1$ (or any other nonzero number) is countably generated but not countably separated (any set containing $1$ also contains $\mathbb{Z}$).
\item The $\sigma$-algebra generated by the analytic sets of $[0,1]$ is not countably generated \cite[p.~15]{RaoRao} but countably separated (just take the separator of the Borel sets).
\item The $\sigma$-algebra of countable and co-countable sets on an uncountable set $Y$ is neither countably generated nor countably separated.
\end{enumerate}
\end{example}

The next theorem is a (substantial) extension of \cite[Theorem 1]{Steinicke21} in the sense that we provide classes of examples for which \eqref{eq:main} does not hold.
\begin{thm}\label{thm:BcapDcounterexample}
Let $\mathcal{B}$ and $\mathcal{D}$ be any countably separated $\sigma$-algebras on an uncountable set $X$, such that
$\mathcal{B}\cap\mathcal{D}=:\mathcal{C}$ is not countably separated.

Then,
\begin{align}\label{eq:negmain}
((\mathcal{D}\vee\mathcal{B})\otimes \mathcal{B})\cap((\mathcal{D}\vee\mathcal{B})\otimes \mathcal{D})\neq (\mathcal{D}\vee\mathcal{B})\otimes(\mathcal{D}\cap\mathcal{B}).
\end{align}
\end{thm}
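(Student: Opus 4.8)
The plan is to produce a single set lying in the left-hand side of \eqref{eq:negmain} but not in the right-hand side, and the natural candidate -- as in \cite[Theorem 1]{Steinicke21} -- is the diagonal $\Delta=\{(x,x):x\in X\}\subseteq X\times X$. Write $\mathcal{A}:=\mathcal{D}\vee\mathcal{B}$, so that the claim amounts to $\Delta\in(\mathcal{A}\otimes\mathcal{B})\cap(\mathcal{A}\otimes\mathcal{D})$ together with $\Delta\notin\mathcal{A}\otimes(\mathcal{B}\cap\mathcal{D})$.

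For membership in the left-hand side, fix a countable separator $\{B_i:i\geq 1\}\subseteq\mathcal{B}$ for $\mathcal{B}$. Since two points of $X$ coincide precisely when they belong to the same members of a separator,
\[
\Delta=\bigcap_{i\geq 1}\bigl((B_i\times B_i)\cup(B_i^c\times B_i^c)\bigr).
\]
Because $\mathcal{B}\subseteq\mathcal{A}$, each rectangle appearing here lies in $\mathcal{A}\otimes\mathcal{B}$, hence $\Delta\in\mathcal{A}\otimes\mathcal{B}$; using a countable separator $\{D_j:j\geq 1\}\subseteq\mathcal{D}$ for $\mathcal{D}$ in the same way gives $\Delta\in\mathcal{A}\otimes\mathcal{D}$. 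Thus $\Delta$ belongs to the intersection on the left of \eqref{eq:negmain}.

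For the other direction, suppose toward a contradiction that $\Delta\in\mathcal{A}\otimes(\mathcal{B}\cap\mathcal{D})$. By the reduction quoted in the proof of Theorem \ref{thm:countable-sub-sigma}, there are countably generated sub-$\sigma$-algebras $\mathcal{A}_0\subseteq\mathcal{A}$ and $\mathcal{C}_0\subseteq\mathcal{B}\cap\mathcal{D}$ with $\Delta\in\mathcal{A}_0\otimes\mathcal{C}_0$. Sections of a product-measurable set stay in the second factor, so $\Delta_x=\{x\}\in\mathcal{C}_0$ for every $x\in X$. Let $\{C_k:k\geq 1\}$ generate $\mathcal{C}_0$. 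By the standard atom representation of a countably generated $\sigma$-algebra, the atom of $x$ in $\mathcal{C}_0$ is $\bigcap_{k\geq 1}C_k^{\varepsilon_k}$ (with $C_k^1=C_k$, $C_k^0=C_k^c$, and $\varepsilon_k=1$ iff $x\in C_k$), and every $\mathcal{C}_0$-measurable set containing $x$ contains this atom; since $\{x\}\in\mathcal{C}_0$, the atom of $x$ must equal $\{x\}$. Hence all atoms of $\mathcal{C}_0$ are singletons, i.e.\ $\{C_k:k\geq 1\}$ separates the points of $X$. But $\{C_k:k\geq 1\}\subseteq\mathcal{C}_0\subseteq\mathcal{B}\cap\mathcal{D}$, so $\mathcal{B}\cap\mathcal{D}$ would be countably separated, contradicting the hypothesis. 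Therefore $\Delta\notin\mathcal{A}\otimes(\mathcal{B}\cap\mathcal{D})$, which proves \eqref{eq:negmain}.

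The step that needs the most care is the last one: from the mere fact that all singletons are $\mathcal{C}_0$-measurable we must extract an \emph{honest countable separator contained in} $\mathcal{C}_0$ (hence in $\mathcal{B}\cap\mathcal{D}$), and this is exactly where the atom description of a countably generated $\sigma$-algebra is essential. The remaining ingredients -- the diagonal identity, stability of sections under forming $x$-slices, and the reduction of a product-measurable set to a countably generated product -- are routine and have already been used earlier in the paper. Uncountability of $X$ enters only through the hypothesis, since it is needed for $\mathcal{B}$, $\mathcal{D}$ with the stated properties to exist at all.
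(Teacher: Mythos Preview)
Your proof is correct and follows essentially the same route as the paper: both use the diagonal $\Delta$ as the witnessing set, establish $\Delta\in(\mathcal{A}\otimes\mathcal{B})\cap(\mathcal{A}\otimes\mathcal{D})$ via the countable-separator representation $\Delta=\bigcap_i\bigl((A_i\times A_i)\cup(A_i^c\times A_i^c)\bigr)$, and argue that $\Delta\in\mathcal{A}\otimes(\mathcal{B}\cap\mathcal{D})$ would force $\mathcal{B}\cap\mathcal{D}$ to be countably separated. The only cosmetic difference is that the paper invokes \cite[Proposition~2.1]{Musial} as a black box for this last implication, whereas you unpack it directly via sections and the atom description of a countably generated $\sigma$-algebra --- your self-contained argument is in fact a proof of that cited proposition in the special case needed here.
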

\begin{proof}
We will  first show that  the diagonal $\Delta=\{(x,x): x\in X\}$ is in in the left hand side of \eqref{eq:negmain},
Let $\left\{A_i: i\geq 1\right\}$ be a countable separator of $\mathcal{B}$. 
 $\Delta$ can be expressed as
\begin{align*}
\Delta=\bigcap_{i\geq 1}\big((A_i\times A_i) \cup (A_i^c\times A_i^c)\big).
\end{align*}  
This can be seen, as for all $x\in X$ and $i\geq 1$, $(x,x)$ is either contained in $A_i\times A_i$ or $A_i^c\times A_i^c$. To exclude any $(x,y)$ with $x\neq y$, take an $A_i$ separating $x$ and $y$, s.t. say $x\in A_i$ and $y\in A_i^c$. Now $(x,y)\notin (A_i\times A_i) \cup (A_i^c\times A_i^c)$. It follows that $\Delta\in \mathcal{B}\otimes \mathcal{B}$. In the same way, $\Delta \in \mathcal{D}\otimes \mathcal{D}$, showing that $\Delta$ is contained in the left hand side of \eqref{eq:negmain}.\smallskip

If for $\sigma$-algebras $\mathcal{H}, \mathcal{I}$ on $X$, the diagonal $\Delta$ is contained in $\mathcal{H}\otimes\mathcal{I}$, then $\mathcal{I}$ needs to be countably separated. This is a special case e.g. of \cite[Proposition 2.1]{Musial} taking $f$ equal to the identity function.

Since $\mathcal{C}$ is not countably separated, it follows that $\Delta$ cannot be contained in the right hand side of \eqref{eq:negmain}.
\end{proof}

Two examples of pairs of $\sigma$-algebras satisfying the conditions of the above theorem can be found in \cite{RaoRao}. The first such example is from \cite[p.~16]{RaoRao} and the second from \cite[Proposition 57, p.~55]{RaoRao}.


\begin{example}
\begin{enumerate}[(a)]
\item This example was first published in \cite{Aumann}, following an idea from P.~R.~Halmos, and was also used as a counterexample for \eqref{eq:main} in \cite{Steinicke21}.

The $\sigma$-algebra $\mathcal{D}$ is the preimage of the Borel sets $\mathcal{B}$ on $X=[0,1]$ under a certain function $f\colon{[0,1]}\to{[0,1]}$ constructed  as follows (see also \cite{Aumann}, \cite{Rao}):

Let $\omega_\mathfrak{c}$ be the first ordinal corresponding to the cardinal $\mathfrak{c}$ of the continuum. Let $(M_{\alpha})_{1\leq\alpha<\omega_{\mathfrak{c}}}$ be an enumeration of all uncountable Borel subsets of $[0,1]$ with uncountable complement. Since all uncountable Borel sets have cardinality $\mathfrak{c}$ (see e.g. \cite[Theorem 13.6]{Kechris}) we can associate to each ordinal $\alpha<\omega_\mathfrak{c}$ a triplet $(x_\alpha,y_\alpha,z_\alpha)$ such that $x_\alpha, y_\alpha\in M_\alpha$, $z_\alpha\in [0,1]\setminus M_\alpha$ and $\{x_\alpha,y_\alpha,z_\alpha\}\cap \bigcup_{\beta<\alpha}\left\{x_\beta,y_\beta,z_\beta\right\}=\emptyset$ (as the cardinality of $\bigcup_{\beta<\alpha}\left\{x_\beta,y_\beta,z_\beta\right\}$ is strictly smaller than $\mathfrak{c}$).
Define $f$ as the function that for each $\alpha<\omega_\mathfrak{c}$ maps $x_\alpha\mapsto z_\alpha$, $z_\alpha\mapsto x_\alpha$ and keeps $y_\alpha$ and all points outside $\bigcup_{\beta<\omega_\mathfrak{c}}\left\{x_\beta,y_\beta,z_\beta\right\}$ fixed.  

Finally, set \begin{align*}
\mathcal{D}:=\sigma(f)=\left\{f^{-1}(B):B\in \mathcal{B}\right\}.
\end{align*}
The intersection $\mathcal{B}\cap\mathcal{D}$ is the $\sigma$-algebra of countable and co-countable sets.

\item \cite[Proposition 57]{RaoRao} Let $f$ be a bijection of $[0,1]$ to some analytic, non-Borel set $A\subseteq[0,1]$. Let $\mathcal{B}=\mathcal{B}([0,1])$, and let $\mathcal{B}_A$ be the trace $\sigma$-algebra on $A$. Set $\mathcal{D}=f^{-1}(\mathcal{B}_A)$. Assume that $\mathcal{B}\cap\mathcal{D}$ were countably separated. Then it contains a separator $\{A_i:i\geq 1\}$ and $\sigma(A_i:i\geq 1)$ is countably generated and countably separated. It follows through \cite[Proposition 5]{RaoRao} (the Blackwell nature of $([0,1],\mathcal{B})$ and $([0,1],\mathcal{B})$) that $\sigma(A_i:i\geq 1)=\mathcal{B}=\mathcal{D}$ and that $f$ were an isomorphism $([0,1],\mathcal{B}([0,1]))\leftrightarrow(A,\mathcal{B}_A)$. This would imply that $A$ is a standard Borel set, which it is not. Hence $\mathcal{B}\cap\mathcal{D}$ is not countably separated.
\end{enumerate}
\end{example}

A consequence of Theorem \ref{thm:BcapDcounterexample} is that the second part of Theorem  2.3 is in fact true in general if $\mathcal{F}$ and $\mathcal{G}$ are countably separated.

\begin{thm}\label{thm:countablySeparatedSub}
If $\mathcal{F}$ and $\mathcal{G}$ are countably separated and 
if $\mathcal{F} \cap \mathcal{G}$ has the property that every countably generated sub $\sigma$-algebra is given by a countable partition, but $\mathcal{F} \cap \mathcal{G}$ itself is not given by a countable partition, \eqref{eq:main} is not true.  
\end{thm}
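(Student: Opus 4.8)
The plan is to reduce the statement to Theorem~\ref{thm:BcapDcounterexample} by constructing, out of the given data, a pair of countably separated $\sigma$-algebras on an uncountable set whose intersection fails to be countably separated, and then to feed these into \eqref{eq:negmain}. First I would observe that, since $\mathcal{F}\cap\mathcal{G}$ is not given by a countable partition but every countably generated sub-$\sigma$-algebra of it is, the algebra $\mathcal{F}\cap\mathcal{G}$ cannot itself be countably generated; moreover $\mathcal{F}$ and $\mathcal{G}$ being countably separated forces the underlying set $U$ to contain all singletons of the relevant sub-structures, and in particular one should check that $U$ (restricted to an appropriate atom, if necessary) is uncountable --- if $U$ were countable, every $\sigma$-algebra on it would be given by a countable partition, contradicting the hypothesis on $\mathcal{F}\cap\mathcal{G}$.

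The key step is then to verify that $\mathcal{C}:=\mathcal{F}\cap\mathcal{G}$ is \emph{not} countably separated. Here is where the hypothesis does its work: if $\mathcal{C}$ were countably separated, it would contain a countable separator $\{A_i:i\geq 1\}$, and the sub-$\sigma$-algebra $\sigma(A_i:i\geq 1)$ would be a countably generated sub-$\sigma$-algebra of $\mathcal{C}$, hence --- by hypothesis --- given by a countable partition, say $\{P_1,P_2,\dotsc\}$. But a countably generated \emph{and} countably separated $\sigma$-algebra separates points (Remark, part~(i)), so each block $P_j$ of the partition must be a singleton; since the $P_j$ partition $U$, this makes $U$ countable, and then $\mathcal{C}$ itself would be given by a countable partition (every $\sigma$-algebra on a countable set is), contradicting the hypothesis. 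Therefore $\mathcal{C}$ is not countably separated.

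Now apply Theorem~\ref{thm:BcapDcounterexample} with $\mathcal{B}:=\mathcal{F}$, $\mathcal{D}:=\mathcal{G}$ on the set $U$: all its hypotheses hold --- $\mathcal{F},\mathcal{G}$ are countably separated, $U$ is uncountable, and $\mathcal{F}\cap\mathcal{G}$ is not countably separated --- so \eqref{eq:negmain} gives
\begin{align*}
((\mathcal{G}\vee\mathcal{F})\otimes\mathcal{F})\cap((\mathcal{G}\vee\mathcal{F})\otimes\mathcal{G})\neq(\mathcal{G}\vee\mathcal{F})\otimes(\mathcal{G}\cap\mathcal{F}).
\end{align*}
Taking $\mathcal{A}:=\mathcal{G}\vee\mathcal{F}$ (a $\sigma$-algebra on the set playing the role of $X$, which here is $U$; note $\mathcal{F}\cap\mathcal{G}=(\mathcal{G}\vee\mathcal{F})\cap\mathcal{F}\cap\mathcal{G}$ is unchanged) exhibits a concrete triple $\mathcal{A},\mathcal{F},\mathcal{G}$ for which \eqref{eq:main} fails, which is exactly the assertion.

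The main obstacle I anticipate is the bookkeeping around the \emph{diagonal argument inside Theorem~\ref{thm:BcapDcounterexample}}: that theorem is stated for $\sigma$-algebras on a set called $X$ and uses the diagonal $\Delta\subseteq X\times X$, whereas our $\mathcal{F},\mathcal{G}$ live on $U$; one must make sure that the substitution $X\leftarrow U$, $\mathcal{B}\leftarrow\mathcal{F}$, $\mathcal{D}\leftarrow\mathcal{G}$ is clean, and in particular that ``uncountable $X$'' is genuinely available --- i.e. that the hypothesis on $\mathcal{F}\cap\mathcal{G}$ really does force $U$ to be uncountable, as sketched above. The rest is a direct citation of the already-proven Theorem~\ref{thm:BcapDcounterexample} and the elementary observations from the Remark following the definition of ``countably separated''.
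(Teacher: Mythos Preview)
Your proposal is correct and follows essentially the same route as the paper: first argue that $\mathcal{F}\cap\mathcal{G}$ cannot be countably separated (via the singleton-partition contradiction), then invoke Theorem~\ref{thm:BcapDcounterexample}. You are in fact a bit more careful than the paper in explicitly verifying that $U$ is uncountable and in spelling out that the resulting $\mathcal{A}$ is $\mathcal{F}\vee\mathcal{G}$, both of which the paper leaves implicit.
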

\begin{proof}
If $\mathcal{F} \cap \mathcal{G}$ has the mentioned property, then, we show that  $\mathcal{F} \cap \mathcal{G}$ is not countably separated. If it is, let $\{A_1, A_2, \cdots \}$ be a countable separator. Then $\sigma(\{A_1, A_2, \cdots \})$ being a countably generated sub $\sigma$-algebra is  given by a countable partition. If $\{B_1, B_2, \cdots\}$ is the countable partition, all the $B_i$s need to be singleton sets. This implies that  $ \sigma(\{B_1, B_2, \cdots \})= \sigma(\{A_1, A_2, \cdots \}) = \mathcal{F} \cap \mathcal{G}$ and that  $\mathcal{F} \cap \mathcal{G}$ is given by a countable partition and this is a contradiction. Thus, $\mathcal{F} \cap \mathcal{G}$ is not countably separated. From Theorem \ref{thm:BcapDcounterexample} the result follows.
\end{proof}

\section{Distributivity and Blackwell spaces}

\begin{definition}\label{def:atom}
Let $(Y,\mathcal{C})$ be a measurable space. 
\begin{enumerate}[(i)]
\item An {\it atom} $K\in \mathcal{C}$ is a nonempty set, such that no proper nonempty subset is contained in $\cal C$. In other words, they are the minimal elements with respect to `$\subseteq$' in $\mathcal{C}$. 

\item A $\sigma$-algebra $\mathcal{C}$ is called {\it atomic} if $Y$ is the union of the atoms of $\cal C$. 
\end{enumerate}
\end{definition}

This definition of an atom does not coincide with the similar notion from measure theory, where it is a set of positive measure, such that all of its proper subsets in the $\sigma$-algebra have zero measure. There are $\sigma$-algebras containing atoms and atomless $\sigma$-algebras \cite[Chapter 3]{RaoRao}. Note that if two atoms of $\cal C$, when they exist, are disjoint.\smallskip

\begin{definition}\label{def:analytic}
Let $(Y,\mathcal{C})$ be a measurable space. 
\begin{enumerate}[(i)]
\item A subset of $[0,1]$ is called {\it analytic} if it is the image of a Polish space under a continuous function.

\item We call $(Y,\mathcal{C})$ {\it analytic} if $Y$ is isomorphic (i.e. there is a bijective, bimeasurable function) to an analytic subset of the unit interval and $\mathcal{C}$ is countably generated and contains all singletons of $Y$.

\end{enumerate}
\end{definition}

We cite the following result of Blackwell \cite{Blackwell} and Mackey \cite{Mackey}, see also  \cite[Chapter 2]{RaoRao}.

\begin{lemma}[{\cite[Section 4]{Blackwell}, \cite[Section 4]{Mackey}, \cite[Proposition 6]{RaoRao}}]\label{lem:blackwell}\ \\
If $(Y,\mathcal{C})$ is an analytic space and $\mathcal{W},\mathcal{V}$ are countably generated sub $\sigma$-algebras of $\mathcal{C}$ with the same atoms, then $\mathcal{W}=\mathcal{V}$. 
\end{lemma}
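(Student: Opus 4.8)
The plan is to reduce the statement to the following \emph{Blackwell property}: if $(Y,\mathcal{C})$ is analytic and $\mathcal{W}\subseteq\mathcal{C}$ is countably generated, then a set $A\in\mathcal{C}$ lies in $\mathcal{W}$ if and only if $A$ is a union of atoms of $\mathcal{W}$. Granting this, the lemma is immediate: if $\mathcal{W},\mathcal{V}\subseteq\mathcal{C}$ are countably generated with the same atoms and $W\in\mathcal{W}$, then $W$ is a union of atoms of $\mathcal{W}$, hence a union of atoms of $\mathcal{V}$; since $W\in\mathcal{C}$ and $\mathcal{V}$ is countably generated, the Blackwell property applied to $\mathcal{V}$ yields $W\in\mathcal{V}$. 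Thus $\mathcal{W}\subseteq\mathcal{V}$, and by symmetry $\mathcal{V}=\mathcal{W}$.

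To establish the Blackwell property, I would first use the isomorphism in the definition of an analytic space to reduce to the case $Y=Z$, $\mathcal{C}=\mathcal{B}(Z)$, where $Z\subseteq[0,1]$ is analytic; here one also invokes the standard fact that every Borel subset of such a $Z$ is an analytic subset of $[0,1]$. Writing $\mathcal{W}=\sigma(W_1,W_2,\dotsc)$, I would encode it by the Borel map $f\colon Z\to\{0,1\}^{\N}$ given by $f(y)=(\one_{W_n}(y))_{n\geq1}$, so that $\mathcal{W}=f^{-1}\big(\mathcal{B}(\{0,1\}^{\N})\big)$ and the atoms of $\mathcal{W}$ are exactly the nonempty fibres of $f$. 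Hence a set $A\in\mathcal{C}$ is a union of atoms of $\mathcal{W}$ precisely when it is $f$-saturated, that is, when $A=f^{-1}(f(A))$.

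Now fix such an $A$. Both $A$ and its complement $A^c$ are Borel in $Z$, hence analytic subsets of $[0,1]$, and therefore $f(A)$ and $f(A^c)$ are analytic subsets of $\{0,1\}^{\N}$, since a Borel image of an analytic set is analytic. As $A$ is $f$-saturated, $f(A)$ and $f(A^c)$ are disjoint. By the Lusin separation theorem there is a Borel set $S\subseteq\{0,1\}^{\N}$ with $f(A)\subseteq S$ and $f(A^c)\cap S=\emptyset$; then $A\subseteq f^{-1}(S)$ and $A^c\subseteq f^{-1}(S^c)$, so $A=f^{-1}(S)\in\mathcal{W}$. This proves the Blackwell property, and with it the lemma.

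I expect the main obstacle to be the final separation step: one must verify with care that the coding map sends the two complementary Borel sets to \emph{disjoint} analytic subsets of Cantor space — the disjointness is exactly the place where the hypothesis ``$A$ is a union of atoms of $\mathcal{W}$'' enters — and then apply the Lusin separation theorem. The remaining ingredients, namely the reduction to $(Z,\mathcal{B}(Z))$ and the stability of analytic sets under Borel images and under passage to Borel subsets, are standard facts of descriptive set theory (see, e.g., \cite{Kechris}) and amount to routine bookkeeping.
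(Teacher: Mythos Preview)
The paper does not prove this lemma at all: it is stated with citations to Blackwell, Mackey, and Bhaskara Rao--Rao, and no argument is given. So there is nothing in the paper to compare your proof against.

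That said, your argument is correct and is essentially the classical proof. The reduction to the ``Blackwell property'' via the coding map $f\colon Z\to\{0,1\}^{\N}$, the identification of atoms with fibres, and the use of Lusin separation on the disjoint analytic images $f(A)$ and $f(A^c)$ is precisely Blackwell's original strategy. The one place to be slightly careful is the implicit claim that every $W\in\mathcal{W}$ is a union of atoms of $\mathcal{W}$; this holds because a countably generated $\sigma$-algebra is atomic (the atom through $y$ being $\bigcap_{n}\bigl(W_n\triangle\{y\notin W_n\}\bigr)$, or equivalently the fibre $f^{-1}(f(y))$), which you use tacitly when applying the Blackwell property in the first paragraph. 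With that noted, the proof goes through without gaps.
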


To generalize the situation of Lemma \ref{lem:blackwell}, we recall the definition of a Blackwell space:

\begin{definition}\label{def:blackwell}
Let $(Y,\mathcal{C})$ be a countably generated and separated (and thus countably separated) measurable space.

\begin{enumerate}[(i)]
\item The space $(Y,\mathcal{C})$ is called {\it Blackwell} if the only separated sub-$\sigma$-algebra of $\mathcal{C}$ is itself.
\item The space $(Y,\mathcal{C})$ is called {\it strongly Blackwell} if any two countably generated sub-$\sigma$-algebras with the same atoms coincide.
\end{enumerate}
\end{definition}

For a thorough account on Blackwell spaces see \cite{RaoShortt}.  Therein it is also shown that under the continuum hypothesis (CH) there are Blackwell spaces which are not strongly Blackwell. The same holds if Martin's axiom and ($\neg$\,CH) is assumed.\medskip

%

To obtain a distributivity characterization in the sense of \cite[Theorem 2]{Steinicke21} in the case of Blackwell instead of analytic spaces, we formulate the following Lemma (which has been shown in several places in the literature during proofs, such as in \cite{RaoRao} or \cite{Steinicke21}).

\begin{lemma}\label{lem:productAtoms}
If $\cal B$ on $X$ and $\cal C$ on $Y$ are atomic $\sigma$- algebras, then $\cal B \otimes \cal C$ is atomic and atoms of $\cal B \otimes \cal C$ are cartesian products.
\end{lemma}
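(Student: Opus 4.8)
The plan is to show the two assertions in sequence: first that every point of $X\times Y$ lies in some atom of $\mathcal{B}\otimes\mathcal{C}$ which is a rectangle, and then that these rectangles are genuinely atoms (i.e.\ minimal) of $\mathcal{B}\otimes\mathcal{C}$. For the first part, fix $(x,y)\in X\times Y$. Since $\mathcal{B}$ is atomic, $x$ lies in a unique atom $K_x\in\mathcal{B}$, and similarly $y$ lies in a unique atom $L_y\in\mathcal{C}$. The natural candidate for the atom containing $(x,y)$ is the rectangle $K_x\times L_y$, which certainly belongs to $\mathcal{B}\otimes\mathcal{C}$ and contains $(x,y)$, so the union of all such rectangles is all of $X\times Y$.

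The substantive step is to prove that $K_x\times L_y$ is an atom of $\mathcal{B}\otimes\mathcal{C}$, i.e.\ that it has no proper nonempty subset in $\mathcal{B}\otimes\mathcal{C}$. Suppose $\emptyset\neq E\subseteq K_x\times L_y$ with $E\in\mathcal{B}\otimes\mathcal{C}$; I want to conclude $E=K_x\times L_y$. The key tool is the standard section argument: for any $E\in\mathcal{B}\otimes\mathcal{C}$ and any $a\in X$, the section $E_a=\{b: (a,b)\in E\}$ lies in $\mathcal{C}$, and for any $b\in Y$ the section $E^b=\{a:(a,b)\in E\}$ lies in $\mathcal{B}$. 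Pick $(a,b)\in E$; then $a\in K_x$ and $b\in L_y$. The section $E_a\in\mathcal{C}$ is a nonempty subset of $L_y$ (since $E\subseteq K_x\times L_y$), and because $L_y$ is an atom of $\mathcal{C}$, minimality forces $E_a=L_y$. Thus $\{a\}\times L_y\subseteq E$. Now take any $b'\in L_y$: the section $E^{b'}\in\mathcal{B}$ is nonempty (it contains $a$) and contained in $K_x$, so by minimality of the atom $K_x$ we get $E^{b'}=K_x$, i.e.\ $K_x\times\{b'\}\subseteq E$. Since $b'\in L_y$ was arbitrary, $K_x\times L_y\subseteq E$, hence $E=K_x\times L_y$. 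This shows $K_x\times L_y$ is an atom, and since every point lies in such a rectangle-atom, $\mathcal{B}\otimes\mathcal{C}$ is atomic with all atoms cartesian products (any atom, being minimal, must equal the rectangle-atom through any of its points).

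The only point requiring a small amount of care — and the main ``obstacle,'' though it is standard — is the justification that sections of sets in a product $\sigma$-algebra belong to the respective factor $\sigma$-algebras; this is the usual good-sets / $\lambda$-system argument (the collection of $E$ with $E_a\in\mathcal{C}$ for all $a$ contains all rectangles and is a $\sigma$-algebra), and I would simply invoke it rather than reproduce it. One should also note in passing that the representation of an atom as $K_x\times L_y$ is forced and unique, so there is no ambiguity in speaking of ``the'' atoms, and that distinct rectangle-atoms are disjoint because distinct atoms of $\mathcal{B}$ (resp.\ $\mathcal{C}$) are disjoint. With these observations the proof is complete.
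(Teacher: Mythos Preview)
Your proof is correct, but the route differs from the paper's. You establish that $K_x\times L_y$ is an atom via the section property of product $\sigma$-algebras: given a nonempty $E\in\mathcal{B}\otimes\mathcal{C}$ inside $K_x\times L_y$, you look at sections $E_a$ and $E^{b'}$ and use minimality of $L_y$ and $K_x$ in the factors to force $E=K_x\times L_y$. The paper instead runs a direct good-sets argument on the product: it observes that every measurable rectangle either contains $B\times C$ or is disjoint from it, and that the family of all $A\in\mathcal{B}\otimes\mathcal{C}$ with this ``contains-or-disjoint'' property is a $\sigma$-algebra, hence is all of $\mathcal{B}\otimes\mathcal{C}$; minimality of $B\times C$ is then immediate. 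Both approaches ultimately rest on a $\lambda$-system/good-sets principle (yours hides it inside the standard section lemma you invoke), so neither is more elementary in any essential way; the paper's argument is marginally more self-contained because it does not appeal to an external result about sections, while yours makes the role of the factor atoms more transparent.
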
 
\begin{proof}
If $B \in \cal B$ and $C \in \cal C$  are atoms, to show that $B \times C$ is an atom, observe that every rectangle in $\cal B \otimes \cal C$ either contains $B \times C$ or is disjoint with $B \times C.$ On the other hand, $\{A \in {\cal B} \otimes {\cal C}: A \mbox{ either contains } B \times C \mbox { or is disjoint with } B \times C\}$ is a $\sigma$-algebra. Hence every set $\cal B \times \cal C$ either contains $B \times C$ or is disjoint with $B \times C.$ But then the union of all such sets (atoms) is equal to $X \times Y.$ Thus $\cal B \otimes \cal C$ is atomic.
\end{proof}

\begin{thm} Let $(X, \cal A), (U, \cal F)$ and $(U, \cal G)$ be measure spaces. In the following, [(i) $\Rightarrow$ (ii)] is always true. The assertion [(ii) $\Rightarrow$ (iii)] is true if $\cal A$ and $\cal F \cap \cal G$ are atomic.  

(i) $$(\cal A \otimes \cal F) \cap (\cal A \otimes \cal G) = \cal A \otimes (\cal F \cap \cal G)$$
(ii)$$(\cal A \otimes \cal F) \cap (\cal A \otimes \cal G) \mbox{ and } \cal A \otimes (\cal F \cap \cal G) \mbox{ have the same atoms} $$
(iii)$$ \mbox{ All atoms of } (\cal A \otimes \cal F) \cap (\cal A \otimes \cal G) \mbox{ are cartesian products }$$

If $(\cal A \otimes \cal F) \cap (\cal A \otimes \cal G) $ is strongly Blackwell (or a sub $\sigma$-algebra of a strongly Blackwell space $(X, \cal M$)),  $(\cal A \otimes \cal F) \cap (\cal A \otimes \cal G) $ is countably generated and $\cal A \otimes (\cal F \cap \cal G)$ is countably generated, then (iii) implies (i).
\end{thm}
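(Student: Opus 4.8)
The implications [(i) $\Rightarrow$ (ii)] and, under the atomicity hypotheses, [(ii) $\Rightarrow$ (iii)] are the easy structural parts, so the content of the statement is the final claim: under the strongly Blackwell and countable generation hypotheses, (iii) implies (i). The plan is to run a ``same atoms $\Rightarrow$ same $\sigma$-algebra'' argument in the spirit of Lemma \ref{lem:blackwell}, but now inside the strongly Blackwell space. Write $\mathcal{H} := (\mathcal{A}\otimes\mathcal{F})\cap(\mathcal{A}\otimes\mathcal{G})$ and $\mathcal{K} := \mathcal{A}\otimes(\mathcal{F}\cap\mathcal{G})$. By the trivial inclusion from the introduction, $\mathcal{K}\subseteq\mathcal{H}$, and both are countably generated by hypothesis (and contained in the strongly Blackwell space, hence both are countably generated sub-$\sigma$-algebras of it). So by the definition of strongly Blackwell it suffices to show $\mathcal{H}$ and $\mathcal{K}$ have the same atoms, and then $\mathcal{H}=\mathcal{K}$, which is exactly (i).

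First I would observe that since $\mathcal{K}\subseteq\mathcal{H}$, every atom of $\mathcal{H}$ is contained in a unique atom of $\mathcal{K}$, so the task is to show these containments are equalities, i.e.\ that the atoms of $\mathcal{K}$ do not split further in $\mathcal{H}$. Here is where (iii) enters: an atom $A$ of $\mathcal{H}$ is, by (iii), a cartesian product $A = P\times Q$. The section argument then forces $P$ and $Q$ to be very small. Indeed, fix a point $x\in P$; the section $A_x = Q$ lies in $\mathcal{F}$ (because $A\in\mathcal{A}\otimes\mathcal{F}$) and in $\mathcal{G}$ (because $A\in\mathcal{A}\otimes\mathcal{G}$), so $Q\in\mathcal{F}\cap\mathcal{G}$. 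Moreover $Q$ must be an atom of $\mathcal{F}\cap\mathcal{G}$: if $Q'\subsetneq Q$ were a nonempty member of $\mathcal{F}\cap\mathcal{G}$, then $P\times Q'$ would be a nonempty proper subset of $A$ lying in $\mathcal{A}\otimes(\mathcal{F}\cap\mathcal{G})\subseteq\mathcal{H}$, contradicting that $A$ is an atom of $\mathcal{H}$. By the same reasoning $P$ must be an atom of $\mathcal{A}$. Hence every atom of $\mathcal{H}$ is a product of an atom of $\mathcal{A}$ with an atom of $\mathcal{F}\cap\mathcal{G}$; by Lemma \ref{lem:productAtoms} applied in the other direction (or directly), such products are precisely the atoms of $\mathcal{K}=\mathcal{A}\otimes(\mathcal{F}\cap\mathcal{G})$. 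Conversely, any atom of $\mathcal{K}$ of the form $P\times Q$ with $P,Q$ atoms must be contained in some atom $A$ of $\mathcal{H}$; but $A$ is itself of this product-of-atoms form, and distinct atoms of $\mathcal{K}$ are disjoint, forcing $A = P\times Q$. Thus $\mathcal{H}$ and $\mathcal{K}$ have exactly the same atoms.

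The final step is then purely formal: $\mathcal{H}$ and $\mathcal{K}$ are countably generated sub-$\sigma$-algebras of the strongly Blackwell space with the same atoms, so by Definition \ref{def:blackwell}(ii) they coincide, giving (i). The main obstacle I anticipate is the bookkeeping around atomicity: to invoke strongly Blackwell we only need ``same atoms implies equal'', but to make the atom-matching argument above airtight one should be careful that every atom of $\mathcal{H}$ really is a product (this is (iii)), that sections genuinely land in $\mathcal{F}$ and $\mathcal{G}$ separately, and that the minimality is not lost when passing to $P\times Q'$; none of this requires $\mathcal{A}$ or $\mathcal{F}\cap\mathcal{G}$ to be atomic, so the atomicity hypothesis is only used for [(ii)$\Rightarrow$(iii)] and not for this implication. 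A secondary subtlety is that one should note $\mathcal{H}$ and $\mathcal{K}$ automatically inherit the ``separated'' property (they contain all singletons, since a countably generated sub-$\sigma$-algebra of a strongly Blackwell — hence countably separated — space is countably separated), so they are legitimate objects to feed into the strongly Blackwell definition.
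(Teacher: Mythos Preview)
Your proposal is correct and follows precisely the route the paper intends: the paper's own proof is the single sentence ``direct consequence of Lemma \ref{lem:productAtoms} and the definition of a strongly Blackwell space,'' and your argument is a faithful unpacking of that---show via sections that every atom of $\mathcal{H}$ is a product of an atom of $\mathcal{A}$ with an atom of $\mathcal{F}\cap\mathcal{G}$, hence an atom of $\mathcal{K}$, deduce that $\mathcal{H}$ and $\mathcal{K}$ have the same atoms, and invoke the strongly Blackwell property. One small correction to your ``secondary subtlety'': countably generated sub-$\sigma$-algebras of a strongly Blackwell space need \emph{not} contain all singletons (the trivial $\sigma$-algebra is a counterexample), but this is irrelevant---Definition \ref{def:blackwell}(ii) imposes no separation requirement on the sub-$\sigma$-algebras, so $\mathcal{H}$ and $\mathcal{K}$ are legitimate inputs simply by being countably generated.
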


\begin{proof}
The proof is a direct consequence of Lemma \ref{lem:productAtoms} and the definition of a strongly Blackwell space. 
\end{proof}

\bibliographystyle{plain}

\end{document}